 \newcommand{\n}{\mathfrak{n} }
 \newcommand{\m}{\mathfrak{m} }
 \newcommand{\q}{\mathfrak{q} }
  \newcommand{\N}{\mathbb{N}}
 \newcommand{\ov}{\overline}
 \newcommand{\dm}{\operatorname{dim}}
 \newcommand{\h}{\operatorname{ht}}
 \newcommand{\gr}{\operatorname{gr}}
\theoremstyle{plain}
 \newtheorem{theorem}{Theorem}[section]
 \newtheorem{corollary}[theorem]{Corollary}
 \theoremstyle{definition}
 \newtheorem{remark}[theorem]{Remark}
 \newtheorem{example}[theorem]{Example}
 \theoremstyle{remark}
 \title[ ] {The Tangent cone of a local ring of  codimension 2}
 \author{Mousumi Mandal and Maria Evelina Rossi}
 \thanks{The first author was supported by INdAM-COFUND Marie-Curie Fellowship.   The second  author  was  supported  by MIUR, PRIN 2010-11 (GVA).  This work was partly accomplished while the first author  was visiting the University of Genoa.}
 \keywords{Minimal free  resolution,  Associated graded ring, Minimal number of generators, Hilbert function}
\subjclass[2010]{13A30,13D02, 13H10}
 \address{Department of Mathematics, Indian Institute of Science Bangalore, 5600012, India} \email{mousumi@math.iisc.ernet.in}
 \address{Dipartimento Di Matematica, Universita' Di Genova, Via Dodecaneso 35, 16146, Genova, Italy} \email{rossim@dima.unige.it}
\begin{document}
\maketitle
\begin{center}{\it Dedicated to Professor Ngo Viet Trung  on the  occasion of his 60th birthday}
\end{center}

\begin{abstract}
Let  $(S, \n)  $ be  a   regular local ring and let $I \subseteq \n^2 $ be a perfect  ideal of $S. $ Sharp upper bounds on the minimal number of generators of $I$ are known  in terms of the Hilbert function of  $R=S/I. $ Starting from information on  the ideal $I,  $ for instance the minimal number of generators, a   difficult task  is to determine  good  bounds on the minimal number of generators of  the leading ideal $I^*  $ which defines the tangent cone of $R$ or  to give information on its  graded structure.          Motivated by papers of  S.C. Kothari, S. Goto {\it{et al.}} concerning the leading ideal  of a complete intersection $I=(f,g) $  in a regular local ring,     we present results provided ht$(I)=2.$     If $I$ is a complete intersection,  we prove that the Hilbert function of $R$  determines  the graded Betti numbers  of the leading ideal and, as a consequence,   we recover  most of the results of the previously  quoted papers.  The description is more complicated if $\nu(I) >2$ and a careful investigation  can be  provided when $\nu(I)=3. $  Several examples illustrating  our results are given.

\end{abstract}
 
\section*{Introduction and Notation}
 
Let $(R, \m, k)  $ be a local ring with maximal ideal $\m$ and residue field $k.$  
The associated graded ring  $G=\gr_{\m}(R)=\oplus_{i\geq 0}\m^i/\m^{i+1}$ corresponds to a relevant geometric construction in the case $R $ is the localization at the origin O of the coordinate ring of an affine variety $V$ passing through O. It turns out that  $G $ is the coordinate ring of the {\it Tangent Cone} of $V$ at O, which is the cone composed of all lines that are the limiting positions of secant lines to $V$ in O.  Consider a  minimal Cohen presentation $S/I $ of $R$ where $(S, \n, k) $    is  a   regular local ring and   $I \subseteq \n^2 $ is   an ideal of $S.  $ We recall that $G \simeq P/I^* $ where $P=\gr_{\n}(S) $ is the polynomial ring and $I^*$ is the homogeneous ideal of $P$ generated by the initial forms of the elements of $I.$ From the algebraic point of view, the local ring $R$ and the  standard graded $k$-algebra $G$ share the same Hilbert function. In fact, by definition, the Hilbert function  of  $R$ is  the numerical function $HF: \N \to \N $ such that $HF_R(i)=\dim_k \m^i/\m^{i+1}.$  

\vskip 2mm
We address our interest  to the  structure of $I^*, $ called the leading ideal of $I, $    which encodes   the algebraic and the geometric information on $I.$  
Denote by $\nu(I)$ the minimal number of generators of $I, $ then by a classical result of Krull and by the definition of $I^*, $ it is  known that

\begin{equation}  \label{leq} \h(I) \le \nu(I) \le \nu(I^*). \end{equation}
Upper bounds for $\nu(I^*)$ (and hence for $\nu(I)$) are known in terms of the multiplicity of $R$ or,  more precisely, in terms  of the Hilbert function of $R,  $  see for instance \cite{BrIar}, \cite{Elias}, \cite{ERV}, \cite{Sally1},  \cite{Sally2}.   

\vskip 2mm
Starting from information on the ideal $I,  $   a  more difficult task  is to prove sharp lower bounds for $I^*  $  improving   $  \nu(I) \le \nu(I^*)$, since this  involves the study of  the structure of $I^*.  $  
 
The work  of Goto {\it{et al.}}   (see \cite{gkh}, \cite{gkm1}, \cite{gkm2})  evidences  the difficulty of  the problem,   even if we assume that $I=(f,g) $ is generated by a regular sequence in a $2$-dimensional regular local ring $S.$  

\vskip 2mm
More in general, if not otherwise specified,    we assume $I \subseteq \n^2 $ is an ideal of  height  $2 $ (not necessarily a complete intersection)  in a $r$-dimensional  regular local ring $(S, \n)  $   such that $G$ is Cohen-Macaulay. Under our assumption $I$ and $I^*$ are homogeneous perfect ideals of codimension two, hence good  information  come from the Hilbert-Burch theorem. The main results of the paper are contained in Corollary  \ref{bound2}, Theorem \ref{unique}, Theorem \ref{integer} and Theorem \ref{three}.  
\vskip 2mm

Let $R=S/I$ and denote by $HS_R(t) = \sum_{j\ge 0} HF_R(j) t^j $ the Hilbert series of $R.$ It is well known that  
$$HS_R(t) = \frac{h_R(t)}{(1-t)^{r-2}} $$ where $h_R(t) = 1 +h_1 t + \dots + h_s t^s   \in  \mathbb Z[t] $ is called $h$-polynomial   and $h_R=(1, h_1, \dots, h_s)$ is called $h$-vector. If $G$ is Cohen-Macaulay of codimension two, 
 then $h_R(t) $ represents the Hilbert series of any Artinian reduction of $R$ (we may assume that $k$ is infinite), hence,   by Macaulay's inequalities, the $h$-vector  
\begin{equation} \label{H}  h_R=(1,2,\ldots ,d, h_d,\ldots ,h_s)\  \mbox{   verifies } \   d\geq h_d\geq h_{d+1}\geq \cdots \geq h_s\geq 1  \end{equation} where  $d$ is  the order of  $I, $ that is the maximum integer such that $I \subseteq \n^d. $

\vskip 2mm From now on,  a numerical function $h$ verifying (\ref{H})    is  called {\it{$O$-sequence}}.  Motivated by the above observation, we assume $R$ is an Artinian local ring of embedding dimension two.  Macaulay proved that, given an $O$-sequence  $h,$ then  there exists an unique lex-segment ideal $L$ in $P$  such that $h=HF_{P/L}. $   Then   using this fact and the Hilbert-Burch Theorem, it is known that (\ref{leq}) specializes to 
\begin{equation}  \label{2} \nu(I) \le \nu(I^*) \le  \nu(L)=d+1.
\end{equation}

 \vskip 2mm
 
The Hilbert function of an Artinian local ring $R$ of embedding dimension $2$ has been studied by several  authors.  

Iarrobino in \cite{I} proved that if $p: = \max\{ |HF_R(i)-HF_R(i+1)| \mid i=1,\ldots,s\}, $ then 
\begin{equation} \label{p}   \nu(I) \ge p+1. \end{equation}
If $ h $ is an $O$-sequence  and $m$ is any  integer verifying $ p+1 \le m \le d+1,$   then Bertella in \cite{b} gave   an effective method  for constructing  an ideal $I\subseteq S=k[|x,y|]$ such that   $\nu(I)=m$ and $HF_{S/I}=h $ (see also \cite[Remark 4.7]{rs}).

In general  $\nu(I^*) \geq m=\nu(I)  $ and  Theorem \ref{bound}  gives a more precise bound.  Notice that the ideals $I$ and $I^*$ share the same Hilbert function, but   in general a different  minimal number of generators occurs. For instance, Example \ref{ex1} shows that  there exist ideals $I$ whose  Hilbert function forces  $\nu(I^*) \ge  \nu(I)+3. $   

\vskip 2mm

\vskip 2mm
As we will explain later, an important tool in our approach is  the technique   of  the consecutive  cancellations described by Peeva in \cite{p} in the graded case and,   by Rossi and Sharifan,  in   \cite{rs} in the local case. It is known that  in codimension two each cancellation can be realized (see \cite{b}). 
 
\vskip 2mm
 Notice that  if $m=2, $ hence  $I=(f,g) $ is generated by a regular sequence,  we prove that the Hilbert function   fixes  the graded Betti numbers of $I^*$ (see Theorem \ref{unique}).  In particular we prove that there is a one to one correspondence between the $O$-sequences  $h$  such that $h=HF_{S/I}$ with  $I=(f,g)$  and  two sequences of integers $(c_1, \dots, c_n) $ and $(e_1, \dots, e_n) $   related by suitable conditions described in Theorem \ref{multiplicity}. The two sequences $(c_1, \dots, c_n) $ and $(e_1, \dots, e_n) $ give also necessary conditions for constructing a complete intersection whose leading ideal has a Betti table determined by these invariants, see Theorem \ref{integer}. 

The  results recover most of the results proved by S. Goto, W. Heinzer and M.K. Kim   in \cite{gkm1} and \cite{gkm2}  where many technical computations were necessary.  In the quoted papers the authors  sharpened a result  by Kothari  in \cite{k} who answered a question  raised by Abhyankar concerning the  Hilbert function of the localization of a pair of plain curves. The new approach  allows  us easier proofs  because it takes advantage of the numerical invariants  coming from a minimal free resolution of $I^*.$ 
 \vskip 2mm

 \vskip 2mm

 If $\nu(I) >2, $ then the  Hilbert function of $R$ does no longer fix the Betti numbers of $I^*$ and   a careful investigation is done if $\nu(I) =3. $

\vskip 2mm

\section{Lower bound for $\nu(I^*)$}
 Given a numerical function $h: \N \to \N, $ let us denote by $ \triangle h(j)=h(j)-h(j-1)$ the first difference and by $ \triangle^2h(j)=\triangle h(j)-\triangle h(j-1)=h(j)-2h(j-1)+h(j-2)$ the second difference operator. Denote by $P$ the polynomial ring $k[x_1,\dots, x_n] $ in $n$ indeterminates.
 
\vskip 2mm
Since $I^*$ is a  homogeneous perfect  ideal of codimension two in the polynomial ring $P,  $ for sake of completeness we insert here some general facts  that  will be useful in the present work. 

 Let $J $ be a perfect homogeneous  ideal  of height   two in $P$  with minimal 
 $P$-free graded   resolution:
 $$ {\bf{F.}}: \  0    \to \oplus_{j \ge 0 }  P^{\beta_{1 j}} (-j)    \overset{\phi} \to  \oplus_{j \ge 0 }  P^{\beta_{0j}} (-j)     \to J  \to 0.    $$   
By the Hilbert-Burch Theorem,  $J$ can be generated by the maximal minors of an homogeneous matrix associated to $\phi $ of size $m \times (m-1)$ where $m= \sum_j  \beta_{0j}. $  
 
 \vskip 2mm
 Accordingly with \cite{rs}, we recall  that    ${\bf{F.}} $ 
 admits a  {\it negative   cancellation }  (resp.   {\it   zero    cancellation })  if there exist integers   $  j < j'  $ (resp. $j=j' $) such that  $\beta_{1 j}, \beta_{0 j'}>0.$ We will denote it by $(P(-j), P(-j')).$ 
 \vskip 2mm
 For example   $ 0  \to P (-3) \oplus P(-5)\oplus P(-6)  \to P^2(-2) \oplus P^2(-5)  \to \dots   $ admits a zero cancellation ($P(-5), P(-5)$) and a negative cancellation $(P(-3), P(-5))$.
 \vskip 2mm

  The following result gives a lower bound   for the number of generators of a  homogeneous perfect  ideal $J$ of codimension two in the polynomial ring $P $   in terms of the second difference operator of its $h$-vector. 
 In particular we prove that when $J$ is minimally generated with respect a bound given by its  Hilbert function, then the graded Betti numbers of $J$ are uniquely determined.

\vskip 2mm
 
   We denote by $|n| $ the positive value of the integer $n.$

\begin{theorem}\label{bound}
Let $h:=(1,2,\ldots, d,h_d,\ldots ,h_s)$ be an $O$-sequence and  define the sets $\mathcal I:=\{j|\triangle^2h(j)\leq -1\}$ and $\mathcal J:=\{j|\triangle^2h(j)\geq 1\}$. Let $J$ be a perfect homogeneous ideal of height two of $P$ and such that $h $ is  the $h$-vector of $G=P/J.$  Then

 \vskip 2mm

\begin{enumerate}
\item  $\displaystyle{ \nu(J) \geq \sum_{i\in \mathcal I}|\triangle^2h(i) |}$. 
\item   If the equality holds,  then the Betti numbers of $J$ are uniquely determined by the Hilbert function  and  
\[
\beta_{0j}(J) = \left\{
 \begin{array}{ll}
  |\triangle^2h(j) | &\mbox{ for }j\in \mathcal I\\
  0 &\mbox{ otherwise ,}
      \end{array}
      \right.
\]
\[
      \beta_{1j}(J) = \left\{
       \begin{array}{ll}
        |\triangle^2h(j) | &\mbox{ for }j\in \mathcal J\\
        0 &\mbox{ otherwise. }
            \end{array}
            \right.
             \]

\end{enumerate}

\end{theorem}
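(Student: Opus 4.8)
The plan is to extract both statements from a single numerical identity relating the second difference of $h$ to the graded Betti numbers of $J$. Since $J$ is a perfect ideal of height two, $P/J$ is Cohen-Macaulay of codimension two and hence has projective dimension $2$; by the Hilbert-Burch Theorem its minimal free resolution is obtained from ${\bf{F.}}$ by splicing in $P$, so it involves only the three terms $\bigoplus_j P^{\beta_{1j}}(-j) \to \bigoplus_j P^{\beta_{0j}}(-j) \to P$. First I would compute $HS_{P/J}(t)$ as the alternating sum of the Hilbert series of these terms, obtaining
\[
HS_{P/J}(t) = \frac{1 - \sum_{j} \beta_{0j}(J)\, t^{j} + \sum_{j} \beta_{1j}(J)\, t^{j}}{(1-t)^{n}}.
\]
Comparing this with $HS_{P/J}(t) = h(t)/(1-t)^{n-2}$ and clearing denominators gives $(1-t)^{2} h(t) = 1 - \sum_j \beta_{0j}(J)\,t^j + \sum_j \beta_{1j}(J)\,t^j$. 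Since the coefficient of $t^{j}$ on the left is exactly $\triangle^{2}h(j)$ (with $h(j)=0$ for $j<0$), matching coefficients yields the master relation
\[
\triangle^{2} h(j) = \beta_{1j}(J) - \beta_{0j}(J) \qquad (j\ge 1),
\]
the $j=0$ contribution being absorbed by $\triangle^2 h(0)=1$ and the free module $P$.

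Given this identity, part (1) is immediate. Because $\beta_{1j}(J)\ge 0$, the relation rewrites as $\beta_{0j}(J) = \beta_{1j}(J) - \triangle^2 h(j) \ge -\triangle^2 h(j)$, and the right-hand side is positive precisely for $j\in\mathcal I$, where it equals $|\triangle^2 h(j)|$. Summing over all $j$ and discarding the nonnegative contributions from $j\notin\mathcal I$ gives $\nu(J)=\sum_j \beta_{0j}(J) \ge \sum_{i\in\mathcal I}|\triangle^2 h(i)|$.

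For part (2), I would revisit the chain $\nu(J) \ge \sum_{i\in\mathcal I}\beta_{0i}(J) \ge \sum_{i\in\mathcal I}|\triangle^2 h(i)|$ used above. Equality throughout forces $\beta_{0j}(J)=0$ for every $j\notin\mathcal I$ and $\beta_{0j}(J)=|\triangle^2 h(j)|$ (equivalently $\beta_{1j}(J)=0$) for every $j\in\mathcal I$. Feeding these values back into the master relation then determines $\beta_{1j}(J)$ for all $j$: since $\mathcal I\cap\mathcal J=\emptyset$, on $\mathcal J$ one has $\beta_{0j}(J)=0$ and hence $\beta_{1j}(J)=\triangle^2 h(j)=|\triangle^2 h(j)|$, while for the remaining indices $\triangle^2 h(j)=0$ and both Betti numbers vanish. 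This is exactly the asserted table.

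The only conceptual subtlety, and the point I expect to require the most care, is that in a minimal resolution one may legitimately have $\beta_{0j}(J)>0$ and $\beta_{1j}(J)>0$ for the same $j$ (a zero cancellation $(P(-j),P(-j))$ in the terminology above). Thus the Hilbert function determines only the difference $\beta_{1j}(J)-\beta_{0j}(J)$, not the individual graded Betti numbers; the bound in (1) counts precisely the unavoidable generators, and the equality case in (2) is exactly the situation in which no such common ghost terms occur, so the entire Betti table is rigidified by $h$.
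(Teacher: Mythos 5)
Your proof is correct, but it follows a genuinely different route from the paper's. The paper first reduces to the Artinian case by factoring out a maximal regular sequence of linear forms, then invokes the lex-segment ideal $L$ with its Eliahou--Kervaire resolution and Peeva's theorem that the minimal resolution of $J$ is obtained from that of $L$ by consecutive zero cancellations; the bound and the equality case are then read off from a careful analysis of which cancellations can actually be performed. You instead bypass lex ideals and the cancellation machinery entirely: since $J$ is perfect of height two, the Hilbert--Burch resolution has just two free modules, so equating the two expressions for the Hilbert series yields the identity $\triangle^2 h(j) = \beta_{1j}(J) - \beta_{0j}(J)$ for $j\ge 1$, and both parts of the theorem follow from this plus nonnegativity of Betti numbers (your handling of the equality case, forcing $\beta_{0j}=0$ off $\mathcal I$ and $\beta_{1j}=0$ on $\mathcal I$, and then propagating through the identity using $\mathcal I\cap\mathcal J=\emptyset$, is exactly right). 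Your argument is shorter, self-contained, and does not even require the reduction to two variables, since the $h$-vector convention already absorbs the Krull dimension. What the paper's heavier approach buys is the cancellation framework itself, which is reused throughout the rest of the paper: it is what lets the authors pass between $\nu(I)$ and $\nu(I^*)$ (negative versus zero cancellations), prove the uniqueness statement for complete intersections in Theorem \ref{unique}, and, via Bertella's construction, show that the lower bound in this theorem is actually attained by some ideal --- a realizability statement that your purely numerical identity cannot address.
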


\begin{proof}
 
We may assume that the residue field $k$ is infinite.  Since $G$ is Cohen-Macaulay of dimension $n-2,$ let $Q=(\ell_1,\ldots ,\ell_{n-2})$ be an  ideal of $P$  generated by linear forms such that $\ov Q = Q+J/J $ is generated by a maximal $G$-regular sequence. Denote $\ov G= G/\ov Q $ and $\ov J = J+Q/Q.$   Now the following facts hold:

\vskip 2mm
- $\beta_{ij}^P  (G) = \beta_{ij}^{P/Q} (\ov G).$

\vskip 2mm
- $HS_G(t) =\frac{HS_{\ov G} (t)}{(1-t)^{n-2}}.$
\vskip 2mm

In particular $HS_{\ov G} (t) = h$ and $\nu(J) =\nu(\ov J).$ Hence from now on we assume that $J \subseteq P= k[x,y] $ and $G=P/J $ is Artinian with Hilbert function determined by $h.$ 

 Let $L$ be the lex-segment ideal associated to $h$. Then $\nu (L)=d+1$. Let us denote by $p_j=\triangle h(j) $ and $q_j=\triangle^2h(j)$.   Since the lex-segment ideal $L$ is Borel fixed,  by Eliahou-Kervaire's  resolution (see \cite{EK}),   the graded minimal free resolution of $L$ is the following  
$$0\longrightarrow \bigoplus_{j=1}^t P^{|p_{c_j}|}(-c_j-1)\longrightarrow P(-d)\bigoplus_{j=1}^tP^{|p_{c_j}|}(-c_j)\longrightarrow L\longrightarrow 0,$$
where $d\leq c_1<c_2< \cdots < c_t, $  $ \beta_0(L)=\sum_{j=1}^t|p_{c_j}|=d+1 $ and   $|p_{c_j}| $ denotes  the number of generators of $ L $ of degree $c_j. $    By \cite[Theorem 1.1]{p} the minimal free resolution of $J $ is obtained from the minimal free resolution of $L$ by a sequence of consecutive zero cancellations. Therefore $\nu(J )=d+1-r$, where $r$ denotes the number of zero cancellations. Hence it is enough to compute the maximum number of zero cancellations that we may perform. 

 In the minimal free resolution of $L$, $P(-d)$ and $P(-c_1)$ cannot  be cancelled since $c_i+1>c_1\geq d $ for  every $ i=1,\ldots ,t$. Now we have to compute $\beta_{0c_j}(J )$ for $j\geq 2$. The zero cancellation at  $c_j$-position with multiplicity $\alpha$ will be denoted by $(P^{\alpha}(-c{_j}),P^{\alpha}(-c_j))$.  A resolution admits a zero cancellation at  $c_j$-th position only if $c_j=c_{j-1}+1$. Let us assume that we have the zero cancellation $(P^{|p_{c_{j-1}}|}(-c_j), P^{|p_{c_j}|}(-c_j))$ at the $c_j$-th position.
If $q_{c_{j}}=p_{c_{j}}-p_{c_{j-1}}<0$ then, after  the   above cancellation,    we obtain $(0,P^{|q_{c_{j}}|}(-c_j))$ and hence  $\beta_{0c_j}(J )=|q_{c_j}|$ and $\beta_{1c_j}(J)=0$. If $q_{c_j}>0$ then the zero cancellation at $c_j$-th place can be replaced by $(P^{|q_{c_j}|}(-c_j),0)$ having  $\beta_{0c_j}(J )=0$ and $\beta_{1c_j}(J)=|q_{c_j}|$. If $q_{c_j}=0$ then it will be replaced by $(0,0)$ and in this case $\beta_{0c_j}(J)=0$ and $\beta_{1c_j}(J)=0$. In any case if we perform the zero cancellation at the $c_j$-th place then $\triangle^2h$ determines $\beta_{0c_j}(J)$ and $\beta_{1c_j}(J)$. 
Assume now $c_{j-1}+2\leq c_j\leq c_{j+1}-2, $   since  we cannot perform zero cancellation at $c_j$ and $c_{j+1}$-th positions we have $p_{c_j-1}=0$ and $p_{c_j+1}=0$ and hence $\beta_{0c_j}(J)=|p_{c_j}|=|p_{c_j}-p_{c_j-1}|=|q_{c_j}|$. In particular $q_{c_j}<0$. Moreover, $\beta_{1c_j+1}(J)=|p_{c_j}|=|p_{c_j+1}-p_{c_j}|=|q_{c_j+1}|$. In particular $q_{c_j+1}>0.$ Thus if we perform all the possible consecutive zero cancellations in the minimal free resolution of $L$ then $\triangle^2h$ determine the graded Betti numbers of the minimal free resolution of $J$. 
Therefore we have $\nu(J)=\sum_{j\geq 1}\beta_{0j}(J)\geq \sum_{i\in \mathcal I}|\triangle^2h(i)|$.
Now if equality holds or equivalently  if we perform all the possible consecutive zero cancellations in the minimal free resolution of $L$ then the minimal free resolution of $J $ has the following form
$$0\longrightarrow\bigoplus_{i\in \mathcal J}P^{\triangle^2h(i)}(-i) \longrightarrow \bigoplus_{i\in \mathcal I}P^{|\triangle^2h(i)|}(-i)\longrightarrow J \longrightarrow 0,$$
which implies that the graded Betti numbers of $J$ are uniquely determined by $h$ as follows
\[
\beta_{0j}(J) = \left\{
 \begin{array}{ll}
  |\triangle^2h(j) | &\mbox{ for }j\in \mathcal I\\
  0 &\mbox{ otherwise, }
      \end{array}
      \right.
\]
\[
      \beta_{1j}(J) = \left\{
       \begin{array}{ll}
        |\triangle^2h(j) | &\mbox{ for }j\in \mathcal J\\
        0 &\mbox{ otherwise. }
            \end{array}
            \right.
             \]
 \end{proof}

We apply the above result in the following situation of our major interest.
\begin{corollary} \label{bound2}  Let $h:=(1,2,\ldots, d,h_d,\ldots ,h_s)$ be an $O$-sequence and  define the sets $\mathcal I:=\{j|\triangle^2h(j)\leq -1\}$ and $\mathcal J:=\{j|\triangle^2h(j)\geq 1\}$. Let $I \subseteq  \n^2$ an  ideal of regular local ring $(S,\n) $   such that $h $ is the $h$-vector of $R=S/I.$  Assume $G=gr_{\m}(R) $ is Cohen-Macaulay, then
 $${ \nu(I^*) \geq \sum_{i\in \mathcal I}|\triangle^2h(i) |}.$$ 
  If the equality holds,  then  
  \[
\beta_{0j}(I^*) = \left\{
 \begin{array}{ll}
  |\triangle^2h(j) | &\mbox{ for }j\in \mathcal I\\
  0 &\mbox{ otherwise, }
      \end{array}
      \right.
\]
\[
      \beta_{1j}(I^*) = \left\{
       \begin{array}{ll}
        |\triangle^2h(j) | &\mbox{ for }j\in \mathcal J\\
        0 &\mbox{ otherwise. }
            \end{array}
            \right.
             \]

\end{corollary}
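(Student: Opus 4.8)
The plan is to reduce the statement to Theorem \ref{bound} by taking $J=I^*$ inside the polynomial ring $P=\gr_{\n}(S)$. Since $S$ is regular, $P$ is indeed a polynomial ring, and by the identification recalled in the Introduction one has $G=\gr_{\m}(R)\simeq P/I^*$; thus the whole task is to check that $I^*$ fulfils the hypotheses imposed on $J$ in Theorem \ref{bound}.

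First I would confirm that $I^*$ is a perfect homogeneous ideal of height two. Homogeneity is immediate from the definition of the leading ideal. For the height, $\h(I)=2$ gives $\dim R=r-2$, and since the associated graded ring preserves dimension we get $\dim G=r-2$; as $P$ has dimension $r$, this yields $\h(I^*)=\cd(I^*)=2$. The hypothesis that $G=P/I^*$ is Cohen-Macaulay then forces $I^*$ to be perfect of codimension two, exactly as noted in the Introduction.

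Next I would match the numerical data. By definition $HF_R(i)=\dim_k \m^i/\m^{i+1}=\dim_k G_i=HF_G(i)$, so $R$ and $G=P/I^*$ carry the same Hilbert function and hence the same $h$-vector. Therefore the given $O$-sequence $h$ is precisely the $h$-vector of $P/I^*$, which is the last hypothesis needed.

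With $J=I^*$ every hypothesis of Theorem \ref{bound} is satisfied, so its conclusion transfers verbatim: the bound $\nu(I^*)\geq \sum_{i\in\mathcal I}|\triangle^2h(i)|$ holds, and when equality occurs the displayed formulas for $\beta_{0j}(I^*)$ and $\beta_{1j}(I^*)$ follow. The only genuinely substantive point is the passage from the local data $(R,\m)$ to the graded ideal $I^*\subseteq P$---namely the isomorphism $G\simeq P/I^*$ together with the transfer of Cohen-Macaulayness and perfectness---after which no further argument is required.
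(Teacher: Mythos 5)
Your proposal is correct and matches the paper exactly: the paper states Corollary \ref{bound2} as an immediate application of Theorem \ref{bound} to $J=I^*\subseteq P=\gr_{\n}(S)$, relying on the facts (noted in the Introduction) that $G\simeq P/I^*$, that Cohen-Macaulayness of $G$ makes $I^*$ a perfect homogeneous ideal of codimension two, and that $R$ and $G$ share the same $h$-vector. Your write-up simply makes these reduction steps explicit, which is precisely the intended argument.
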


\begin{remark} By the  effective method proved by Bertella in \cite{b}, it is always possible to find an ideal  attaining the minimal value according to Corollary \ref{bound2}. Let $h:=(1,2,\ldots, d,h_d,\ldots ,h_s)$ be an $O$-sequence. 

Define  $\mathcal I:=\{j|\triangle^2h(j)\leq -1\}$  and    $p: = \max\{|\triangle h(i)|  \mid i=1,\ldots,s\}.   $ Then there exists an ideal 
 $I $  of $S= k[[x,y]] $   such that 

 \begin{enumerate}
 \item $h=HF_{S/I}  $, 
\item  $  \nu(I^*) = \sum_{i\in \mathcal I}|\triangle^2h(i) | $, and
\item  $\nu(I) =m $ for every integer $m$ such that $ p+1 \le m \le  \nu(I^*).$
\end{enumerate}

\end{remark}

The following example will be useful to clarify the above remark.

\begin{example} \label{ex1}
Consider the following $O$-sequence:
$$ h =(1, 2, 3, 4, 5, 6, 7, 8, 9, 10, 10, 10, 9, 8, 8, 5, 3, 3, 2, 0, 0).$$
Then
 $\triangle h= (1, 1, 1, 1, 1, 1, 1, 1, 1, 1, 0, 0,  -1, -1, 0, -3,  -2, 0,  -1, -2, 0 ) $ and 
 $ \triangle^2 h=$ 
 
 $= (1,0, 0, 0, 0, 0, 0, 0, 0, 0, 0, -1, 0, -1, 0, 1, -3, 1, 2, -1, -1, 2 ). $ 

Our goal is to construct an ideal   $I \subseteq S=k[[x,y]] $  such that $h =HF_{S/I} $ and $  \nu(I^*) = \sum_{i\in \mathcal I}|\triangle^2h(i) | $. Then  the order  of   $I$ is $d=10$ and the maximal jump in the Hilbert function   is $p=\max_{i\geq 1}\{|\triangle h(i)|\}=3$.   Moreover,  by  Equations (\ref{2}) and (\ref{p}),  we have  $4\leq \nu(I) \leq 11 $  and, by Theorem \ref{bound},  we deduce $7  \leq \nu(I^*) \leq 11.  $ 
Bertella's construction suggests to realize $I^*$ by a  suitable deformation  of the Hilbert-Burch matrix associated to the (unique) lex-segment ideal $L$ such that $h = HF_{S/L}.$ On his turn, the ideal $I$ will be obtained in the analogous way by the matrix associated to $I^*.$     

\vskip 2mm
It is easy to verify that  $$L=(x^{10},x^9y^3,x^8y^5,x^7y^8,x^6y^9,x^5y^{10},x^4y^{12},x^3y^{13},x^2y^{16},xy^{18}, y^{19})$$ is the lex-segment ideal associated  to $h$. The minimal free resolution of $L$ is as follows:
{\small{$$0\longrightarrow P(-13)\oplus P(-14)\oplus P^3(-16)\oplus P^2(-17)\oplus P(-19)\oplus P^2(-20)\longrightarrow$$ $$P(-10)\oplus P(-12)\oplus P(-13)\oplus P^3(-15)\oplus P^2(-16)\oplus P(-18)\oplus P^2(-19)\longrightarrow L\longrightarrow 0$$}}
The above resolution admits   at most 4  consecutive zero cancellations:  $(P(-13),P(-13))$, $(P^2(-16),P^2(-16))$ and $(P(-19),P(-19))$. Hence, by Theorem \ref{bound},  $\nu(I^*) \geq 7. $ For having $\nu(I^*)=7  $   we have to perform all the above  cancellations. Then we obtain the minimal free resolution of $I^*$ as follows
{\small{$$0 \to P(-14)\oplus P(-16)\oplus P^2(-17)\oplus P^2(-20) \longrightarrow$$ $$
 P(-10)\oplus P(-12)\oplus P^3(-15)\oplus P(-18)\oplus P(-19)\to I^* \longrightarrow  0$$}}with $\beta_{0j}(I^*)=|\triangle^2h(j)|$ for $j\in \mathcal I$ and $\beta_{1j}(I^*)=|\triangle^2h(j)|$ for $j\in \mathcal J$.  
Consider the Hilbert-Burch matrix associated to $L,  $ we may realize an ideal $I^*$ having the above resolution replaicing  $0$ by $1$ in the entries corresponding to the zero cancellations.     Then $I^*$ 
is  the ideal generated  by the maximal minors of the following matrix  
  \[M:=
 \begin{pmatrix}
 y^3& 0& 0 &0 &0 &0 &0 &0& 0& 0\\
 -x& y^2& 0& 0& 0& 0& 0& 0& 0& 0\\
 1& -x& y^3& 0& 0& 0& 0& 0& 0& 0\\
 0& 0& -x& y& 0& 0& 0& 0& 0& 0\\
 0& 0& 0& -x& y& 0& 0& 0& 0& 0\\
 0&0& 0& 0& -x& y^2& 0& 0& 0& 0\\
 0&0& 1& 0& 0& -x& y& 0& 0& 0\\
 0& 0& 0& 1& 0& 0& -x& y^3& 0& 0\\
 0& 0& 0& 0& 0& 0& 0& -x& y^2& 0\\
 0& 0& 0& 0& 0& 0& 0& 1& -x& y\\
 0& 0& 0& 0& 0& 0& 0& 0& 0& -x
 \end{pmatrix}
 \]

In particular $ I^*=(x^{10} - 2x^8y^2 - x^6y^4 + 4x^4y^6 - 2x^2y^8, -x^9y^3 + x^7y^5 + 2x^5y^7 - 2x^3y^9, -x^7y^8 + x^5y^{10} + x^3y^{12} - xy^{14}, x^6y^9 - x^4y^{11}, -x^5y^{10} + x^3y^{12}, x^2y^{16}, y^{19}).  $   
 
The resolution of $I^*$ admits the following three negative cancellations $(P(-14), P(-15))$, $(P(-16), P(-18))$ and $(P(-17), P(-19))$. Hence $4 \leq m=\nu(I) \leq 7.$ For having an ideal with $\nu(I) =m, $ by Rossi and Sharifan's result in \cite[Remark 4.7]{rs} we have to perform $7-m $  negative cancellations. As before, by replacing in the matrix $M$ the entries $0$'s by $1$'s in the positions corresponding to the negative cancellations,   we obtain the wanted  ideal.  For instance, an ideal $4$-generated whose    leading ideal is $I^*$ :

  $I=(x^{10} - 2x^8y^2 - x^6y^4 + 4x^4y^6 - 2x^2y^8+x^4y^9 + x^3y^{10} - x^6y^6 - x^5y^7 + x^4y^8 + x^3y^9 - x^8y^3 + x^6y^5 + x^4y^7 - x^2y^9,  - x^9y^3 + x^7y^5 + 2x^5y^7 - 2x^3y^9 -x^3y^{12 }- x^2y^{13} + x^5y^9 + x^4y^{10} + x^7y^6 - x^5y^8 - x^3y^{10} + xy^{12},  x^6y^9 - x^4y^{11} + y^{17} - x^5y^{10} + x^3y^{12}, -x^2y^{15 }- xy^{16},    -x^5y^{10} + x^3y^{12}+ y^{17}).$

 \end{example}

\section{When $I$ is a complete intersection}
In this section we investigate the structure of $I^*$ when $\nu(I)=2, $ that is,  $I$ is a complete intersection in a regular local ring $S $ of dimension two. The results can be extended to higher dimension, provided $G$ is Cohen-Macaulay. 

We prove that the numerical invariants of the graded minimal free resolution of $I^*$ are uniquely determined by the Hilbert function. This is quite unexpected because,  in general,  different Betti tables correspond to the same Hilbert function.  

We recall that if $h$ is an $O$-sequence,  then there exists a complete intersection ideal $I=(f,g) \subseteq S=k[[x,y]] $ such that $HF_{S/I} =h $ if and only if  $  |\triangle h(i)|  \leq 1  $  for every $i. $

\begin{theorem}\label{unique}
Let $h:=(1,2,\ldots, d,h_d,\ldots ,h_s)$ be an $O$-sequence and  define the sets $\mathcal I:=\{j|\triangle^2h(j)\leq -1\}$ and $\mathcal J:=\{j|\triangle^2h(j)\geq 1\}$.   Then for all ideals $I$ generated by a regular sequence in a $2$-dimensional regular local ring $S$ such that  $h=HF_{S/I}, $     the graded Betti numbers of $I^*$ are uniquely determined by $h$ as follows:
\[
\beta_{0j}(I^*) = \left\{
 \begin{array}{ll}
  |\triangle^2h(j) | &\mbox{ for }j\in \mathcal I\\
  0 &\mbox{ otherwise, }
      \end{array}
      \right.
\]
\[
      \beta_{1j}(I^*) = \left\{
       \begin{array}{ll}
        |\triangle^2h(j) | &\mbox{ for }j\in \mathcal J\\
        0 &\mbox{ otherwise. }
            \end{array}
            \right.
             \]
\end{theorem}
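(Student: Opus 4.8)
The plan is to prove that for a complete intersection the lower bound of Corollary \ref{bound2} is always an equality; by the second assertion of that corollary this pins down the graded Betti numbers to the displayed values. So everything reduces to showing
\[
\nu(I^*)=\sum_{i\in\mathcal I}|\triangle^2 h(i)|.
\]

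First I would pass to the Artinian graded setting exactly as in the proof of Theorem \ref{bound}: after cutting by a maximal $G$-regular sequence of linear forms we may take $P=k[x,y]$ and $G=P/I^*$ Artinian with $HS_G(t)=h(t)$. Reading the Hilbert series off the graded minimal free resolution of $P/I^*$ gives $(1-t)^2 h(t)=1-\sum_j\beta_{0j}(I^*)t^j+\sum_j\beta_{1j}(I^*)t^j$, and since the left-hand side equals $\sum_j\triangle^2 h(j)t^j$, comparing coefficients yields
\[
\beta_{1j}(I^*)-\beta_{0j}(I^*)=\triangle^2 h(j)\qquad(j\ge 1).
\]
This identity holds for \emph{every} perfect height-two ideal with $h$-vector $h$, so the asserted formula is equivalent to the statement that no degree $j\ge 1$ simultaneously supports a minimal generator and a minimal first syzygy of $I^*$, that is $\beta_{0j}(I^*)\,\beta_{1j}(I^*)=0$ for all $j$. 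A short check shows this coincidence-free condition is exactly the equality in the display above, and it is what must be extracted from the hypothesis $\nu(I)=2$; Cohen--Macaulayness of $G$ alone does not suffice, as Example \ref{ex1} shows.

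The heart of the argument, and the step I expect to be hardest, is to produce the minimal free resolution of $I^*$ explicitly from the complete intersection data. Concretely I would build a standard basis of $I=(f,g)$ by the classical leading-form recursion: whenever two current generators have initial forms sharing a common factor, one forms a combination of strictly larger order, and iterating produces generators $\phi_0,\dots,\phi_n$ of $I$ whose initial forms minimally generate $I^*$, together with the relations among them. The resulting Hilbert--Burch matrix has the quasi-bidiagonal shape visible in the matrix $M$ of Example \ref{ex1}, and from such a matrix one reads directly that the set of generator degrees and the set of first-syzygy degrees of $I^*$ are disjoint, which is precisely $\beta_{0j}(I^*)\beta_{1j}(I^*)=0$. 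The decisive use of $\nu(I)=2$ is that the minimal $S$-free resolution of $R=S/I$ is the Koszul complex, carrying a \emph{single} relation of order $\mathrm{ord}(f)+\mathrm{ord}(g)$; transported through the negative consecutive cancellations of \cite{rs} that carry the resolution of $I^*$ onto that of $I$, this single top syzygy is what forbids a leftover generator and syzygy in a common degree. I anticipate the genuine obstacle to be the degree bookkeeping in the recursion—in particular that the leading form of the Koszul relation need not be a minimal syzygy of $I^*$ when $f^*$ and $g^*$ share a factor—so that the disjointness of the two degree sets has to be tracked through the algorithm rather than deduced from the numerical identity alone.
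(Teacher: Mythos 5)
Your opening reduction is correct and is in fact the same first move the paper makes: from the graded resolution one gets $\beta_{1j}(I^*)-\beta_{0j}(I^*)=\triangle^2h(j)$ for $j\ge 1$, so the asserted formulas are equivalent to the disjointness condition $\beta_{0j}(I^*)\,\beta_{1j}(I^*)=0$ for all $j$, i.e.\ to equality in Corollary \ref{bound2}, after which Theorem \ref{bound}(2) finishes. The gap is that you never prove this disjointness. What you call the heart of the argument is a plan, not a proof: run the leading-form recursion on $(f,g)$, obtain a quasi-bidiagonal Hilbert--Burch matrix, read off that generator degrees and syzygy degrees never coincide. You yourself flag that the degree bookkeeping in this recursion is the genuine obstacle, and indeed carrying it out rigorously is essentially the content of the Goto--Heinzer--Kim papers \cite{gkm1}, \cite{gkm2} --- precisely the technical route this paper was written to bypass. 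Leaving that step as a sketch leaves the theorem unproved.

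Moreover, the one concrete mechanism you do offer --- that the single Koszul syzygy of $I$, transported backwards through the negative cancellations of \cite{rs}, forbids a generator and a syzygy of $I^*$ in a common degree --- does not work: the negative-cancellation constraint alone cannot rule out overlapping Betti tables. Concretely, let $h$ be the $h$-vector of a complete intersection of orders $2$ and $8$, so that $(1-t)^2h(t)=(1-t^2)(1-t^8)$, and consider the table with generators in degrees $2,3,8$ and syzygies in degrees $3,10$. It reproduces the same Hilbert series, since $-t^2-t^3-t^8+t^3+t^{10}=-t^2-t^8+t^{10}$; it has an overlap in degree $3$; and yet the single negative cancellation $(P(-3),P(-8))$ reduces it to total Betti numbers $(2,1)$, so it is perfectly compatible with the Rossi--Sharifan passage from $I^*$ to a two-generated $I$. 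What excludes such tables is Peeva's theorem \cite{p}: the Betti table of $I^*$ must arise from that of the lex-segment ideal $L$ (here $L=(x^2,xy^7,y^9)$, with generator degrees $2,8,9$) by zero cancellations, which can never create a generator in degree $3$. This is where the paper's proof lives, and its essential ingredient is one you never invoke: by Bertella (or by Iarrobino's bound (\ref{p})), $\nu(I)=2$ forces $p=\max_i|\triangle h(i)|=1$, so the Eliahou--Kervaire resolution of $L$ has single generators in strictly increasing degrees $d\le c_1<c_2<\cdots<c_d$; then each $P(-c_i)$ with $i\ge 2$ has exactly one admissible cancellation partner $P(-c_{i-1}-1)$, the whole cancellation diagram down to two generators is unique, and in particular every available zero cancellation is forced --- which is exactly the equality $\nu(I^*)=\sum_{i\in\mathcal I}|\triangle^2h(i)|$ you need. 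Without the lex ideal, Peeva's theorem, and the $p=1$ input, neither of your two mechanisms closes the argument.
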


\begin{proof}
Since $\nu(I)=2, $ then by \cite[Theorem 2.3]{b}  we have $p=\max_{i\geq 1}\{|\triangle h(i)|\}=1.$ Let $L$ be the lex-segment ideal corresponding to $h$. By \cite[Theorem 4.1]{rs} the Betti numbers of $I$ come  from the Betti numbers of $L$ by a sequence of consecutive zero and negative cancellations.  One can prove easily  that when $I$ is a complete intersection,  then there exists a unique diagram of cancellations which forces  the Betti numbers of $I^*.$     Then by Eliahou-Kervaire's  resolution in \cite{EK} the minimal free resolution of $L$ is of the form
$$0\longrightarrow \bigoplus_{j=1}^d P(-c_j-1)\longrightarrow P(-d)\bigoplus_{j=1}^dP(-c_j)\longrightarrow L\longrightarrow 0$$
where $d\leq c_1<c_2<\cdots <c_d.$
Note that $P(-d)$ and $P(-c_1)$ can not be cancelled as $d\leq c_1<c_i+1$ for $i=1,\ldots ,d.$  Hence for having $\nu(I)=2, $   all the remaining part   $\bigoplus_{j\ge 2} ^dP(-c_j)$ must be cancelled by zero or negative cancellations.    Now  $P(-c_2)$ can be only cancelled with $P(-c_1-1)$ (zero or negative cancellation). Going on inductively on $i, $ we observe that for each $c_i, $   we have only one candidate:  $(P(-c_{i-1}-1),P(-c_i))$ and the cancellation must be performed for reaching  $\nu(I)=2.$  In particular we have to perform all the zero cancellations in the   minimal free resolution of $L$. Therefore $\nu(I^*)=\sum_{i\in \mathcal I}|{\triangle^2h(i)|}$ and then by Theorem \ref{bound}$(2)$ the Betti numbers of $I^*$ are uniquely determined by $h$ as 
\[
\beta_{0j}(I^*) = \left\{
 \begin{array}{ll}
  |\triangle^2h(j) | &\mbox{ for }j\in \mathcal I\\
  0 &\mbox{ otherwise, }
      \end{array}
      \right.
\]
\[
      \beta_{1j}(I^*) = \left\{
       \begin{array}{ll}
        |\triangle^2h(j) | &\mbox{ for }j\in \mathcal J\\
        0 &\mbox{ otherwise. }
            \end{array}
            \right.
             \]

\end{proof}

\medskip

On the spirit of the results of Goto-Heinzer-Kim proved in \cite{gkm1},  we present a deeper investigation of the structure of the leading ideal of a complete intersection by using the techniques of the present paper.

Let $(S,\n)$ be a $2$-dimensional regular local ring and $I=(f,g)$ be a complete intersection ideal with $\n$-adic valuations $v_{\n}(f)=a\leq v_{\n}(g)=b$ and $f^* \nmid g^*$. In \cite{gkm1} Goto, Heinzer and Kim have proved that if $\nu(I^*)=n, $ then $I^*$ contains a homogeneous system of generators $\{\zeta_i\}$ such that $f^*=\zeta_1, g^*=\zeta_2$ and $\deg \zeta_i+2\leq \deg \zeta_{i+1}$ for $2\leq i\leq n-1$ and they described the Hilbert series of $G$ in terms of  $c_i=$ the degree of  $\zeta_i$ and the integers $d_i= $  the degrees of $ GCD(\zeta_1,\ldots ,\zeta_i)$.

Given a  complete intersection ideal $I=(f,g), $ they  associate a positive integer $n$ with $2\leq n\leq a+1,$  an ascending sequence of positive integers $(c_1=a,\ldots ,c_n)$ and a descending sequence of integers $(d_1=c_1,d_2,\ldots ,d_n=0)$ such that $c_{i+1}-c_i>d_{i-1}-d_i>0$ for each $i$ with $2\leq i\leq n-1$. In \cite[Theorem 2.3]{gkm2} Goto, Heinzer and Kim proved that     these two sequence of integers $c_i$'s and $d_i$'s give also  sufficient conditions for the existence of a complete intersection ideal $I$ whose leading ideal has these invariants. 

\vskip 2mm
      In the following theorem we prove the analogous  of  \cite[Theorem 1.2, 1.3]{gkm1}, but we replace the sequence of $d_i$'s with a sequence, say $e_i$'s,     directly related to the minimal free resolution of $I^*$. Obviously explicit relations among these integers exist  and they  will be  discussed in Remark \ref{relation}. The new approach  allows  us easier proofs  because it takes advantage of the homological properties of the perfect ideal $I^*.$
     
     \vskip 2mm
     
We recall that  important   numerical information on a  perfect homogeneous ideal of codimension two  (in this case $I^*$) come from the Hilbert-Burch theorem:  all the possible Hilbert functions (more in general the graded Betti numbers) of an ideal minimally generated by $n$  forms with assigned degrees
$c_1, \dots, c_n$  are in one to one  correspondence with the $(n-1)$-tuples $ (e_2,\dots , e_n) $ such
that $c_i  < e_i <  c_{i+1}, $  $c_1+1 \ge n $ and $\sum_i c_i = \sum_i e_i. $ In our case more conditions will be necessary because $I^*$ is the leading ideal of a complete intersection of given valuations.  The theory of the cancellations come to our help. For our assumption  we need  a stronger condition given by item $(3) $ in the following result.  
\begin{theorem}\label{multiplicity}
Let $(S,\n)$ be a regular local ring of dimension 2 and let $I=(f,g)$ be a complete intersection ideal in $S$ with $v_{\n}(f)=d\leq v_{\n}(g)=b$. Let $\nu(I^*)=n$. Then there exist  two sequences  of integers $(c_1,\ldots, c_n)$ and $(0=e_1,e_2\ldots ,e_n)$ such that the following assertions hold true.
 \begin{enumerate}
 \item $c_1=d$ and $c_2=b$,
 \item $c_i+2\leq c_{i+1}$ for $i=2,\ldots ,n-1$,
 \item $c_i+1\leq e_i <c_{i+1}$ for $i=2,\ldots, n-1$ and $c_n+1\leq e_n$, 
 \item $\sum_{i=2}^n(e_i-c_i)=d$,
 \item $HS_{G}(t) =\frac{\sum_{i=1}^n(t^{e_i}-t^{c_i})}{(1-t)^2}$,
 \item $e(G)=\frac{\sum_{i=1}^n[e_i(e_i-1)-c_i(c_i-1)]}{2}$, where $e(G)$ denotes the multiplicity of $G$,
 \item $a(G)=e_n-2$, where $a(G)$ denotes the $a$-invariant of $G$.
 \end{enumerate}
\end{theorem}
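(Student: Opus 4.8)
The plan is to leverage Theorem \ref{unique}, which (since $I=(f,g)$ is a complete intersection) already tells us that the graded Betti numbers of $I^*$ are completely determined by the Hilbert function through the second difference $\triangle^2 h$. This is the crucial input: it means the minimal free resolution of $I^*$ has the form
$$0\longrightarrow\bigoplus_{i\in\mathcal J}P^{\triangle^2h(i)}(-i)\longrightarrow\bigoplus_{i\in\mathcal I}P^{|\triangle^2h(i)|}(-i)\longrightarrow I^*\longrightarrow 0,$$
so I can read off the generator degrees and the first-syzygy degrees directly from $h$. First I would \emph{define} the sequences: let $(c_1,\ldots,c_n)$ be the multiset of degrees $j$ appearing in $\mathcal I$, each $j$ repeated $|\triangle^2 h(j)|$ times and listed in increasing order, and let $(e_2,\ldots,e_n)$ be the degrees in $\mathcal J$ listed the same way, with the convention $e_1=0$. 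With this definition items $(1)$ and $(2)$ follow from the shape of the resolution: the least generator degree is the order $d=v_\n(f)$, so $c_1=d$; the next generator degree is $v_\n(g)=b$, so $c_2=b$ (here one uses $f^*,g^*$ as the two lowest-degree minimal generators, which the Hilbert-Burch/cancellation analysis supplies); and the strict gaps $c_i+2\le c_{i+1}$ for $i\ge 2$ come from the Goto-Heinzer-Kim degree-gap statement recalled just before the theorem, equivalently from the structure of the consecutive cancellations.

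Items $(3)$ and $(4)$ are the heart of the numerical bookkeeping, and I expect item $(4)$, $\sum_{i=2}^n(e_i-c_i)=d$, to be the main obstacle — though a clean one. The inequalities in $(3)$, namely $c_i+1\le e_i<c_{i+1}$ for $2\le i\le n-1$ and $c_n+1\le e_n$, are exactly the interlacing conditions for a height-two perfect ideal that were quoted before the theorem ($c_i<e_i<c_{i+1}$), and they are forced here by the minimality of the resolution together with the gap condition $(2)$: a syzygy of degree $e_i$ must strictly exceed the generator degree $c_i$ it partly involves, and the single missing cancellation in each band pins it below $c_{i+1}$. For $(4)$ I would argue via the Hilbert series. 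Because $G$ is Cohen-Macaulay of codimension two, $h_R(t)=(1-t)^2\,HS_G(t)$ equals the numerator produced by the resolution, namely $h_R(t)=1-\sum_{i=2}^n(t^{c_i}-t^{e_i})$, where the initial $-t^{c_1}+t^{e_1}=-t^d+1$ accounts for the $(1,2,\ldots,d)$ part. Evaluating the identity $\sum_i c_i=\sum_i e_i$ (the standard rank-and-degree constraint that the alternating sum of twists in a resolution of a codimension-two perfect ideal vanishes in degree $0$ and $1$) against the fact that $c_1=d$, $e_1=0$ yields $\sum_{i=2}^n(e_i-c_i)=c_1-e_1=d$. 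So $(4)$ reduces to the first-order numerical condition on the Betti twists, which I would verify by comparing coefficients of $t^0$ and $t^1$ in the resolution identity.

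Items $(5)$, $(6)$, $(7)$ are then formal consequences of having the resolution in hand. For $(5)$, the additivity of Hilbert series along the exact sequence defining $I^*$ gives
$$HS_{P/I^*}(t)=\frac{1-\sum_{i\in\mathcal I}|\triangle^2h(i)|\,t^{i}+\sum_{i\in\mathcal J}\triangle^2h(i)\,t^{i}}{(1-t)^2}=\frac{\sum_{i=1}^n(t^{e_i}-t^{c_i})}{(1-t)^2},$$
using $G\simeq P/I^*$ and my indexing of the two sequences. For $(6)$, the multiplicity is $e(G)=h_R(1)$ computed from the numerator after clearing $(1-t)^2$; differentiating $\sum_i(t^{e_i}-t^{c_i})$ twice and evaluating at $t=1$ (which is legitimate because $(4)$ kills the value and first derivative at $t=1$, so division by $(1-t)^2$ is regular) produces exactly $\tfrac12\sum_i[e_i(e_i-1)-c_i(c_i-1)]$. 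Finally $(7)$ follows from the standard formula relating the $a$-invariant to the top degree of the canonical module of a codimension-two Cohen-Macaulay quotient: $a(G)=\max_i e_i-2=e_n-2$, since $e_n$ is the largest first-syzygy twist and the regularity of $P/I^*$ is $e_n-1$. The only genuine work is $(3)$ and $(4)$; everything downstream is substitution into the resolution already guaranteed by Theorem \ref{unique}.
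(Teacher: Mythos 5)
Your overall strategy is the same as the paper's: both proofs rest on the cancellation calculus of Peeva and Rossi--Sharifan, use Theorem \ref{unique} (explicitly in your case, implicitly in the paper's) to pin down the minimal free resolution of $I^*$, define $(c_i)$ and $(e_i)$ as its twists, and then read items (1), (4), (5), (6), (7) off the resolution. Your treatment of (4) via order-two vanishing of the resolution numerator at $t=1$ is exactly what the paper means by ``it follows by the projective dimension two'', and your dualization argument for (7) is a sound substitute for the paper's observation that the last syzygy of the lex-segment ideal can never be cancelled (your aside that $\reg(P/I^*)=e_n-1$ is off by one --- it is $\reg(I^*)$ that equals $e_n-1$ --- but this does not affect the conclusion).

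The genuine gap is the upper bound $e_i<c_{i+1}$ in item (3). You assert it is ``forced by the minimality of the resolution together with the gap condition (2)''; it is not, and no purely numerical condition you list forces it. Take $c=(10,11,13)$ and $e=(14,20)$: these satisfy the gap condition $c_2+2\le c_3$, the sorted minimality inequalities $e_i\ge c_i+1$, the absence of common twists in homological degrees $0$ and $1$, and the degree identity $\sum_{i\ge 1}c_i=\sum_{i\ge 2}e_i$, yet $e_2=14>c_3=13$. These data are realized by the perfect ideal $J=(y^{10},x^{4}y^{7},x^{13})$, whose minimal free resolution is
$$0\longrightarrow P(-14)\oplus P(-20)\longrightarrow P(-10)\oplus P(-11)\oplus P(-13)\longrightarrow J\longrightarrow 0.$$
Of course $\triangle h$ of $P/J$ attains the value $-2$, so $J$ cannot be the leading ideal of a complete intersection --- and that is precisely the point: the hypothesis $\nu(I)=2$ must be used again at this step, not only through Theorem \ref{unique}. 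The paper closes the gap with Rossi--Sharifan's result: since $\nu(I)=2$, exactly $n-2$ negative cancellations must be performed to pass from the resolution of $I^*$ to that of $I$; these match the generators of degrees $c_3,\dots,c_n$ injectively with syzygies of strictly smaller degree, and counting syzygies below each $c_{i+1}$ yields $e_i<c_{i+1}$ for $i=2,\dots,n-1$. Alternatively you could invoke Bertella's theorem, quoted in the proof of Theorem \ref{unique}, that $\nu(I)=2$ forces $|\triangle h(j)|\le 1$ for all $j$: then for $j\ge d$ the function $\triangle h$ takes values in $\{0,-1\}$, so its drops (the elements of $\mathcal I$) and its rises (the elements of $\mathcal J$) must alternate, which gives the interlacing $c_2<e_2<c_3<\cdots<c_n<e_n$ at once --- and, incidentally, also item (2), for which your appeal to Goto--Heinzer--Kim would otherwise be circular in spirit, since the paper's purpose is to reprove their results. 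Your phrase ``the single missing cancellation in each band pins it below $c_{i+1}$'' gestures at the first of these arguments but does not supply it.
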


\begin{proof}
Let $HF_{S/I}$ be the Hilbert function of $S/I$ and let $L$ be the lex-segment ideal corresponding to $HF_{S/I}$. The minimal free resolution of $L$ by \cite{EK} has the following shape:

$$0\longrightarrow \oplus_{j=1}^dP(-a_j-1)\longrightarrow P(-a_0)\oplus_{j=1}^dP(-a_j)\longrightarrow L\longrightarrow 0$$
where $a_0=d=v_{\n}(f)$ and $a_1=b=v_{\n}(g)$.
Note that in the resolution of $L$, $P(-a_0)$ and $P(-a_1)$ can not be cancelled since $a_0\leq a_1<a_j+1$ for $j=1,\ldots ,d$. We consider the two sets of integers corresponding to the shifts in homological positions $0$ and $1 $ in a minimal free resolution of $I^*:$   
$$0\longrightarrow \oplus_{j=2}^nP(-e_j)\longrightarrow P(-c_1)\oplus_{j=2}^nP(-c_j)\longrightarrow I^*\longrightarrow 0.$$

Then $c_1=a_0=d$ and $c_2=a_1=b$. Since the resolution of $I^*$ is obtained after performing all the possible zero cancellations we have $c_i+2\leq c_{i+1}$ for $i=2,\ldots ,n-1$. Since $\nu(I)=2$, by Rossi and Sharafan's result in \cite[Remark 4.7]{rs} we have to perform $n-2$ negative cancellations. Hence we have  $c_i+1\leq e_i <c_{i+1}$ for $i=2,\ldots, n-1$. Condition (4) is well known and it follows by the projective dimension two.

From the resolution of $I^*$ we can compute the Hilbert series and by   \cite[Lemma 4.1.13]{bh},  we have $HS_{G}(t)=\frac{\sum_{i=1}^n(t^{e_i}-t^{c_i})}{(1-t)^2}$ and hence $e(G)=\frac{\sum_{i=1}^n[e_i(e_i-1)-c_i(c_i-1)]}{2}$.

Since $\dm G=0$  the $a$-invariant of $G$ is  $a(G)=\max\{t|G_t\not=0\}=\max \{t|h_t\not= 0\} $. From the minimal free resolution of $L$ it is clear that $a(G)=a_d-1$. In the minimal free resolution of $L$, $P(-a_d-1)$ can never be cancelled since $a_j<a_d+1$ for $j=0,1,\ldots,d$ so it will remain in the minimal free resolution of $I^*$ and therefore $e_n=a_d+1$. Hence we have $a(G)=e_n-2$. 

\end{proof}

 In Theorem 2.2 we observe that item (2) follows from item (3), nevertheless we think that it is interesting to outline  that the degrees of the initial forms cannot be consecutive, except those of the  first generators.

\vskip 2mm

\begin{remark}\label{relation}
Goto-Heinzer-Kim in \cite[Theorem 1.3]{gkm1} have expressed the Hilbert series of $G$ in terms of the sequence $c_i$'s and $d_i$'s as follows:
$$HS_G(t)=\frac{\sum_{i=2}^nt^{d_i}(1-t^{d_{i-1}-d_i})(1-t^{c_i-d_i})}{(1-t)^2}$$ 
where $d_1=c_1>d_2 >\cdots >d_{n-1}>d_n=0.$ 
 
Now comparing this expression with Theorem \ref{multiplicity} (5), we obtain for $i=2,\ldots n$
  
 \begin{equation} \label{ed} e_i=c_i+(d_{i-1}-d_i)   \end{equation}  
 
\end{remark}

\vskip 2mm

Let notation be as in Theorem 2.2. In the next theorem we prove  that the   two sequences  of integers $c_i$'s and $e_i$'s are also sufficient for the existence of  a complete intersection ideal $I$ whose leading ideal has these invariants. The result is the analogous of    \cite[Theorem 2.3]{gkm2}.
\begin{theorem}\label{integer}
Let $a$ and $b$ be positive integers with $a\leq b$ and consider the following data:
\begin{enumerate}
\item An integer $n$ with $2\leq n\leq a+1$.
\item A sequence of integers $(a=c_1,b=c_2,c_3,\ldots ,c_n)$ such that $c_i+2\leq c_{i+1}$ for $i=2,\ldots ,n-1$.
\item A sequence of integers $(0=e_1,e_2,\ldots ,e_n)$ such that $c_{i}+1\leq e_i< c_{i+1}$ for $i=2,\ldots ,n-1$ and $c_n+1\leq e_n$ and $\sum_{i=2}^n(e_i-c_i)=a$.
\end{enumerate}
For each system satisfying these conditions there exists an ideal $I=(f,g)\subseteq S=k[|x,y|]$ generated by a complete intersection with $v_{\n}(f)=a$ and $v_{\n}(g)=b$ such that $\nu(I^*)=n$ and these two sequences of integers $c_i$'s and $e_i$'s completely determine the minimal free resolution of $I^*$ as follows:
 
$$0\longrightarrow \oplus_{j=2}^nP(-e_j)\longrightarrow P(-c_1)\oplus_{j=2}^nP(-c_j)\longrightarrow I^*\longrightarrow 0.$$
\end{theorem}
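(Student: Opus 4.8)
The plan is to run Theorem \ref{multiplicity} backwards. Starting from the data $(c_i)$ and $(e_i)$ I build a candidate $h$-vector, verify it is the Hilbert function of \emph{some} complete intersection, and then invoke Theorem \ref{unique} to force the minimal free resolution of the leading ideal to be exactly the prescribed one. Since $G=P/I^*$ is Artinian with $P=k[x,y]$, its Hilbert series numerator satisfies $K(t)=(1-t)^2\sum_{j}h_j t^j$, so the coefficient of $t^j$ in $K(t)$ is precisely $\triangle^2 h(j)$. I therefore set (recall $e_1=0$)
\[
K(t)=\sum_{i=1}^n\bigl(t^{e_i}-t^{c_i}\bigr)
\]
and define $h$ by declaring $\triangle^2 h(j)$ to be the coefficient of $t^j$ in $K(t)$, with $h_{-1}=h_{-2}=0$. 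The equalities $K(1)=0$ (equal numbers of $c_i$ and $e_i$) and $K'(1)=\sum_i(e_i-c_i)=0$ (which is exactly condition $(3)$, since $e_1-c_1=-a$) show $(1-t)^2\mid K(t)$, so $h$ is a bona fide polynomial sequence. It remains to prove that $h$ is an $O$-sequence with $p:=\max_i|\triangle h(i)|=1$ and that the associated invariants reproduce the prescribed resolution.

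The heart of the argument is the following bookkeeping. Conditions $(2)$ and $(3)$ force the exponents to interleave as
\[
e_1=0<c_1=a\le c_2=b<e_2<c_3<e_3<\cdots<c_n<e_n,
\]
all inequalities being strict except possibly $c_1\le c_2$. Summing the coefficients of $K(t)$ up to degree $j$ gives
\[
\triangle h(j)=\#\{\,i : e_i\le j\,\}-\#\{\,i : c_i\le j\,\},
\]
and walking along the interleaved list shows this partial count equals $1$ for $0\le j<a$, becomes $0$ at $j=a$, and thereafter alternates between $-1$ (just after each $c_i$) and $0$ (just after each $e_i$), returning to $0$ at $j=e_n$; the one possible coincidence $c_1=c_2$ (when $a=b$) simply sends $\triangle h$ from $1$ straight to $-1$. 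Hence $\triangle h(j)\in\{-1,0,1\}$ for all $j$, so $h=(1,2,\dots,a,h_a,\dots)$ is an $O$-sequence of order $d=a$ with $p=1$. The same walk identifies the supports of the second difference: its negative part sits on $\{c_1,\dots,c_n\}$ with $\sum_{i\in\mathcal I}|\triangle^2 h(i)|=n$, while in positive degrees its positive part sits on $\{e_2,\dots,e_n\}$, each $e_i$ occurring with multiplicity one (the value $\triangle^2 h(0)=1$ being the constant term of $K$, which accounts for the free summand $P$ in the resolution of $P/I^*$ and not for a first syzygy).

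Because $|\triangle h(i)|\le 1$, the characterization recalled just before Theorem \ref{unique} guarantees a complete intersection $I=(f,g)\subseteq S=k[[x,y]]$ with $HF_{S/I}=h$; concretely one may build it by Bertella's method \cite{b} as in Example \ref{ex1}, deforming the Hilbert--Burch matrix of the lex-segment ideal $L$ attached to $h$ to realize $I^*$ with the generator and syzygy degrees dictated by $\mathcal I$ and $\mathcal J$, and then, after $n-2$ negative cancellations, a $2$-generated ideal $I$. By Theorem \ref{unique} the graded Betti numbers of $I^*$ are uniquely determined by $h$; substituting the supports computed above yields exactly
\[
0\longrightarrow\bigoplus_{j=2}^nP(-e_j)\longrightarrow P(-c_1)\oplus\bigoplus_{j=2}^nP(-c_j)\longrightarrow I^*\longrightarrow 0,
\]
and in particular $\nu(I^*)=n$. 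The two lowest generator degrees of $I^*$ are $c_1=a$ and $c_2=b$, realized by the initial forms $f^*$ and $g^*$ of the chosen minimal pair, giving $v_{\n}(f)=a$ and $v_{\n}(g)=b$.

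The step I expect to require the most care is this last one: the abstract characterization only controls the Hilbert function, hence only the order $d=a=\min\{v_{\n}(f),v_{\n}(g)\}$, whereas the statement asks for \emph{both} valuations $v_{\n}(f)=a$ and $v_{\n}(g)=b$. This is why I would argue through Bertella's explicit Hilbert--Burch deformation rather than through bare existence: there the generator of $I^*$ of degree $c_2=b$ is produced as a genuine initial form $g^*$, pinning $v_{\n}(g)=b$. The only other point needing separate verification is the boundary case $a=b$, where $c_1=c_2$ and $\triangle^2 h(a)=-2$; here $f^*$ and $g^*$ share degree $a$, contributing $\beta_{0a}(I^*)=2$, and one checks directly that the interleaving argument and the count $\nu(I^*)=n$ go through unchanged.
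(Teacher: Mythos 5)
Your proposal is correct and follows essentially the same route as the paper's proof: the paper defines $q=\triangle^2h$ and $p=\triangle h$ inductively with exactly the supports you read off from $K(t)$, checks that the resulting $h$ is an $O$-sequence with $\max_i|\triangle h(i)|\le 1$ vanishing beyond $e_n-2$, invokes Bertella's theorem to produce a complete intersection $I=(f,g)$ with $HF_{S/I}=h$, and concludes by applying Theorem \ref{unique}. Your generating-function packaging of that bookkeeping, and your extra care about the valuation claim $v_{\n}(g)=b$ and the harmless term $\triangle^2h(0)=1$ (points the paper passes over in silence), are refinements of the same argument rather than a different one.
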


\begin{proof}
From the two sequence of integers $c_i$'s and $e_i$'s let us define a numerical function as follows:
\[
q(j)= \left\{
\begin{array}{rl}
 -1 &  \mbox { if } j=c_i \mbox{ for }3\leq i\leq n\\
     -2&   \mbox{ if } j=c_1=c_2\\
     -1 & \mbox{ if }  j=c_1,c_2 \mbox{  and } c_1\not=c_2\\
     1& \mbox{  if } j=e_i \mbox{  for } i=2,\ldots ,n\\
     0 &\mbox{ otherwise. }
     \end{array}
     \right.
\]
Then define another numerical function $p$ inductively as follows $$p(0)=1 \mbox{ and } p(j)=p(j-1)+q(j) \mbox{ for } j\geq 1.$$ Then we get 
\[
p(j)= \left\{
\begin{array}{rr}
 1 &  \mbox { if } 1\leq j<c_1\\
     -1&   \mbox{ if } c_i\leq j<e_i \mbox{ for } 2\leq i\leq n\\
     -1 & \mbox{ if }  j=c_1=c_2 \\
     0 &\mbox{ otherwise .}
     \end{array}
     \right.
\]

Now define the numerical function $h$ inductively as follows $$h(0)=1 \mbox{ and } h(j)=h(j-1)+p(j) \mbox{ for } j\geq 1.$$ Then we can observe that 
\begin{enumerate}
\item $h(j)=j+1$ for $0\leq j< c_1$ 
\item $h(j)\geq h(j+1)$ for $j\geq c_1$
\item $|h(j)-h(j-1)|=|p(j)|\leq 1$ for $j\geq 1$.
\end{enumerate}
Since $p(j)=-1$ for $c_i\leq j<e_i$ with $i=2,\ldots ,n$ and $\sum_{i=2}^n(e_i-c_i)=c_1$ we have $h(e_n-2)=1\geq h(e_n-1)=0=h(j)$ for all $j\geq e_n$. The above conditions imply that $h$ is an numerical function admissible for an Artinian graded $k$-algebra of codimension 2. Note that $p$ and $q$ actually denote the $\triangle h$ and $\triangle^2h$ respectively. Let $\mathcal I:=\{j|\triangle^2h(j)\leq -1\}$ and $\mathcal J:=\{j|\triangle^2h(j)\geq 1\}$. By \cite[Theorem 2.4]{b} there exists a complete intersection ideal $I=(f,g)$ such that $HF_{S/I}=h$. Then by Theorem \ref{unique} the graded Betti numbers of $I^*$ are uniquely determined by $h$ as follows
\[
\beta_{0j}(I^*) = \left\{
 \begin{array}{ll}
  |\triangle^2h(j) | &\mbox{ for }j\in \mathcal I\\
  0 &\mbox{ otherwise, }
      \end{array}
      \right.
\]
\[
      \beta_{1j}(I^*) = \left\{
       \begin{array}{ll}
        |\triangle^2h(j) | &\mbox{ for }j\in \mathcal J\\
        0 &\mbox{ otherwise. }
            \end{array}
            \right.
             \]
and the minimal free resolution of $I^*$ can be expressed in terms of these two sequences of integers as  follows:
$$0\longrightarrow \oplus_{j=2}^nP(-e_j)\longrightarrow P(-c_1)\oplus_{j=2}^nP(-c_j)\longrightarrow I^*\longrightarrow 0.$$
\end{proof}

We  give  a concrete example for clarifying the above result.

\begin{example}\label{example}
Consider the two sequences of integers $(4,5,8,11)$ and $(6,9,13)$  satisfying  the conditions of Theorem \ref{integer}. We will exhibit  a complete intersection ideal $I=(f,g) \subseteq k[[x,y]] $ such that the numerical invariants of the minimal free resolution of $I^*$ will be completely determined by these two sequence of integers, see equation (\ref{star}).  By Theorem \ref{multiplicity}(5) the Hilbert series corresponding to the two sequences $(4,5,8,11)$ and $(6,9,13)$ is given as
\begin{eqnarray*} HS_G(t)  &= & \frac{(1-t^{4}+t^{6}-t^{5}+t^{9}-t^{8}+t^{13}-t^{11})}{(1-t)^2}\\
&=&1+2t+3t^2+4t^3+4t^4+3t^5+3t^6+3t^7+2t^8+2t^9+2t^{10}+t^{11}
\end{eqnarray*}
 and the lex-segment ideal with the above Hilbert series is $L=(x^4,x^3y^2,x^2y^6,xy^{10},y^{12})$. The minimal free resolution of $L$ is of the form 
{\small $$0\longrightarrow P(-6)\oplus P(-9) \oplus P(-12)\oplus P(-13)\longrightarrow P(-4) \oplus P(-5) \oplus P(-8) \oplus P(-11) \oplus P(-12)\longrightarrow L$$}
 The resolution of $L$ admits one zero cancellation $(P(-12),P(-12))$. The resolution of $I^*$ is obtained by
 performing the zero cancellation as follows
\begin{equation} \label{star} 0\longrightarrow P(-6)\oplus P(-9) \oplus P(-13)\longrightarrow P(-4) \oplus P(-5) \oplus P(-8) \oplus P(-11) \longrightarrow I^* 
\end{equation} 
By  Bertella's construction as in Example  \ref{ex1}, then  $I^*=(-xy^{10}, x^2y^6 - y^8, -x^3y^2 + xy^4, x^4 - x^2y^2)$ is generated by the maximal minors of the following matrix 
\[
\begin{pmatrix}
y^2 & 0 &0 &0\\
-x & y^4 &0 &0\\
0& -x &y^4 &0\\
0 & 0 & -x & y^2\\
0 &0 & 1& -x 
\end{pmatrix}
\]
The resolution of $I^*$ admits two negative cancellations : $(P(-6),P(-8))$ and $(P(-9),P(-11))$. By performing the negative cancellation we obtain the minimal free resolution of $I$, where $I=(xy^6 - x^3y^2 + xy^4, -2x^2y^4 + y^6 + x^4 - x^2y^2)$ is generated by the maximal minors of the following matrix
\[
\begin{pmatrix}
y^2 & 0 &0 &0\\
-x & y^4 &0 &0\\
1& -x &y^4 &0\\
0 & 1 & -x & y^2\\
0 &0 & 1& -x 
\end{pmatrix}
\]

\end{example}

\vskip 2mm
In \cite[Theorem 1.6]{gkm1}, Goto, Heinzer and Kim extended their results \cite[Theorem 1.2 and 1.3]{gkm1} to dimension $s>2, $ provided $G$ is Cohen-Macaulay.   Using our approach  the extension is immediate  because the numerical invariants of the free resolution do not change modulo a regular sequence.

\begin{theorem}\label{cm}
Let $(S,\n)$ be a regular local ring of dimension $s > 2$. Let $I=(f,g)$ be an ideal generated by a regular sequence in $S$ and let  $R=S/I$ and $\m=\n/I$. We put $G:=gr_{\m}(R)$. If $G$ is Cohen-Macaulay ring and $\nu(I^*)=n, $ then there exists two sequences of integers 
$(c_1,\ldots,c_n)$ and $(e_1=0,e_2,\ldots,e_n)$ satisfying  the following conditions: 
\begin{enumerate}
\item $c_1=v_{\n}(f)$ and $c_2=v_{\n}(g)$
\item $c_i+2\leq c_{i+1}$ for $i=2,\ldots ,n-1$.
\item $c_i+1\leq e_i <c_{i+1}$ for $i=2,\ldots, n-1$ and $c_n+1\leq e_n$.  
\item $\sum_{i=2}^n(e_i-c_i)=v_{\n}(f)$.
\item $HS_G(t)=\frac{\sum_{i\geq 1}(t^{e_i}-t^{c_i})}{(1-t)^s}$.
\item $a(G)=e_n-s$.
\end{enumerate}
\end{theorem}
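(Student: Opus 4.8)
The plan is to reduce to the two-dimensional situation of Theorem \ref{multiplicity} by killing a maximal regular sequence of linear forms in $G$, and then to read off each of the six conclusions from the corresponding statement in dimension two. Throughout we may assume $k$ is infinite. Since $I=(f,g)$ is a complete intersection of height two in the $s$-dimensional regular local ring $S$, the ring $R=S/I$ is Cohen--Macaulay of dimension $s-2$, and by hypothesis $G=\gr_{\m}(R)=P/I^*$ is Cohen--Macaulay of the same dimension, where $P=\gr_{\n}(S)=k[x_1,\dots,x_s]$ and $I^*$ is perfect of codimension two, with Hilbert--Burch resolution of the shape displayed in the statement.

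First I would choose $z_1,\dots,z_{s-2}\in\n$ whose initial forms $z_1^*,\dots,z_{s-2}^*\in P$ form a maximal $G$-regular sequence of linear forms; this is possible because $k$ is infinite and $G$ is Cohen--Macaulay of dimension $s-2$. Setting $Q=(z_1^*,\dots,z_{s-2}^*)$, $\ov P=P/Q\cong k[x,y]$, and $\ov G=G/QG=\ov P/\ov{I^*}$ with $\ov{I^*}=(I^*+Q)/Q$, the two facts already used in the proof of Theorem \ref{bound} apply verbatim: the graded Betti numbers are unchanged, $\beta_{ij}^{P}(G)=\beta_{ij}^{\ov P}(\ov G)$, and $HS_{G}(t)=HS_{\ov G}(t)/(1-t)^{s-2}$. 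In particular the two shift sequences $(c_1,\dots,c_n)$ and $(0=e_1,e_2,\dots,e_n)$ attached to the resolution of $I^*$ coincide with those attached to $\ov{I^*}$, and $\nu(I^*)=\nu(\ov{I^*})=n$.

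Next I would identify $\ov{I^*}$ as the leading ideal of a complete intersection in a two-dimensional regular local ring. Since $G$ is Cohen--Macaulay, the fact that $z_1^*,\dots,z_{s-2}^*$ is a $G$-regular sequence forces $z_1,\dots,z_{s-2}$ to be an $R$-regular sequence with $\gr_{\ov\m}(\ov R)=\ov G$, where $\ov R=R/(z_1,\dots,z_{s-2})R$; choosing the $z_i$ with linearly independent leading forms makes $\ov S=S/(z_1,\dots,z_{s-2})$ a two-dimensional regular local ring in which $\ov I=(\ov f,\ov g)$ is a complete intersection with $\ov R=\ov S/\ov I$. Because passing to $\ov{I^*}$ preserves minimal generators degree by degree, the least-degree generator $f^*$ of degree $c_1$ and the generator $g^*$ of degree $c_2$ survive, so $v_{\ov\n}(\ov f)=v_{\n}(f)=c_1$ and $v_{\ov\n}(\ov g)=v_{\n}(g)=c_2$, and $\ov{I^*}=(\ov I)^*$. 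Now Theorem \ref{multiplicity} applies to $\ov I$ in $\ov S$ and yields items (1)--(4) verbatim (they involve only the shift sequences, which are preserved), together with $HS_{\ov G}(t)=\sum_{i=1}^{n}(t^{e_i}-t^{c_i})/(1-t)^2$ and $a(\ov G)=e_n-2$.

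Finally I would transport the last two formulas upward. Dividing the Hilbert series identity for $\ov G$ by $(1-t)^{s-2}$ and using $HS_{G}(t)=HS_{\ov G}(t)/(1-t)^{s-2}$ gives $HS_{G}(t)=\sum_{i\ge 1}(t^{e_i}-t^{c_i})/(1-t)^s$, which is item (5). For the $a$-invariant, each quotient by a regular linear form raises the $a$-invariant by one, so $a(\ov G)=a(G)+(s-2)$; combined with $a(\ov G)=e_n-2$ this gives $a(G)=e_n-s$, which is item (6). The main obstacle I anticipate is precisely this descent step: one must check that a maximal $G$-regular sequence of linear forms lifts to elements $z_1,\dots,z_{s-2}\in\n$ that simultaneously (i) cut $\ov S$ down to a regular local ring of dimension two, (ii) remain an $R$-regular sequence so that $\ov I$ is genuinely a complete intersection, and (iii) preserve the valuations $v_{\n}(f),v_{\n}(g)$ and the associated graded ring. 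All three follow from the Cohen--Macaulayness of $G$ together with the general-position choice available over the infinite field $k$, and this is exactly where the hypothesis that $G$ is Cohen--Macaulay is used.
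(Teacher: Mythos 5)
Your proposal is correct and follows essentially the same route as the paper's proof: reduce modulo a maximal $G$-regular sequence of linear forms (chosen in general position over the infinite residue field, so that $\ov S$ is a two-dimensional regular local ring), use the invariance of graded Betti numbers together with the standard transformation rules $HS_G(t)=HS_{\ov G}(t)/(1-t)^{s-2}$ and $a(G)=a(\ov G)-(s-2)$, and then invoke Theorem \ref{multiplicity} in the resulting Artinian situation. Your additional verification that the reduction preserves the complete-intersection structure, the leading ideal, and the valuations of $f$ and $g$ only spells out what the paper compresses into the isomorphism $\gr_{\ov\m}(\ov R)\simeq G/Q$.
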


\begin{proof}
We may assume that the residue field $k$ is infinite. Let $\n=(x_1,x_2,\ldots ,x_s)$. Since $G$ is Cohen-Macaulay of dimension $s-2,$ after a change of coordinates  we may assume  that $Q=(x_3^*,\ldots ,x^*_s)$ is generated by a $G$-regular sequence. Let $ \q=(x_3,\ldots ,x_s)$ and consider  $\ov S=S/\q$ with maximal ideal $\ov \n=\n/\q$. Let $\ov I=(\ov f, \ov g)$ where $\ov f, \ov g$ denote the images of $f$ and $g$ in $\ov S$, respectively. Then $\ov I$ is a parameter ideal in $\ov S$ and hence $\ov R=\ov S/\ov I$ is the Artinian local ring of embedding dimension two with   maximal ideal $\ov {\m}=\ov {\n}/\ov I.$  Since $G$ is Cohen-Macaulay we have $\gr_{\ov {\m}}(\ov R)\simeq G/Q= \ov G. $ We put $P:=gr_{\n}(S)$. Now the following facts hold:

\vskip 2mm
- $\beta_{ij}^P  (G) = \beta_{ij}^{P/Q} (\ov G).$

\vskip 2mm
- $HS_G(t) =\frac{HS_{\ov G}(t)}{(1-t)^{s-2}}$

\vskip 2mm
- $a(G)=a(\ov G)-(s-2)$

\vskip 2mm
Thus by passing to $\ov R$ the result follows by Theorem \ref{multiplicity}.  

\end{proof}
  
\begin{remark}

From Theorem \ref{multiplicity}, it follows that the Hilbert series of $G$ is uniquely determined by two sequences of integers $(c_1,c_2,\ldots ,c_n)$ and $(e_1=0,e_2,\ldots ,e_n)$. Usually the sequence $(c_1,\ldots ,c_n) $ does not uniquely determine the Hilbert series of $G$. But if $\nu(I^*)=c_1+1, $ then the minimal free resolution of $P/I^*$ is same as the minimal free resolution of the lex segment ideal associated and in that case we know that $e_i=c_i+1$ for $=2,\ldots ,n$.  Hence the Hilbert series of $G$ is uniquely determined by the sequence $(c_1,c_2,\ldots ,c_n)$.

\end{remark}

Next example shows that in general the sequence $ (c_1,c_2,\ldots ,c_n)$ does not determine uniquely the Hilbert series of $G. $

\begin{example}
Let $(S,\n)$ be a regular local ring and $I$ be a complete intersection ideal. Given the admissible sequence $(c_1,c_2,c_3,c_4)=(4,5,8,11), $ then,  by Theorem \ref{integer},  the possible values  for $(e_2,e_3,e_4)$ are 
$$(6,9,13),(6,10,12) \mbox{ and }(7,9,12).$$
They correspond to    three different Hilbert series of  $G$ and each of them is realizable.\\
$(1)$  The two sequence of integers $(4,5,8,11)$ and $(6,9,13)$ have already been discussed in Example \ref{example}.\\ $(2)$  The Hilbert series corresponding to the two sequences $(4,5,8,11)$ and $(6,10,12)$ is given by 
\begin{eqnarray*}
HS_G(t)  &= &\frac{(1-t^{4}+t^{6}-t^{5}+t^{10}-t^{8}+t^{12}-t^{11})}{(1-t)^2}\\
 &=& 1+2t+3t^2+4t^3+4t^4+3t^5+3t^6+3t^7+2t^8+t^9+t^{10}
\end{eqnarray*}

and the corresponding lex-segment ideal is $L=(x^4,x^3y^2,x^2y^6,xy^{8},y^{11})$. The minimal free resolution of $L$ is of the form 
{\small $$0\rightarrow P(-6)\oplus P(-9)\oplus P(-10) \oplus P(-12)\longrightarrow P(-4) \oplus P(-5) \oplus P(-8) \oplus P(-9)\oplus P(-11) \longrightarrow L \rightarrow 0.$$}
The resolution of $L$ admits only one zero cancellation $(P(-9),P(-9))$. The resolution of $I^*$ is obtained by
 performing the zero cancellation as follows
$$0\longrightarrow P(-6)\oplus P(-10) \oplus P(-12)\longrightarrow P(-4) \oplus P(-5) \oplus P(-8) \oplus P(-11) \longrightarrow I^*\longrightarrow 0 .$$
For instance $I^*=(x^4 - x^2y^2, -x^3y^2 + xy^4, x^2y^6, y^{11})$ is generated by the maximal minors of the following matrix 
\[
\begin{pmatrix}
y^2 & 0 &0 &0\\
-x & y^4 &0 &0\\
0& -x &y^2 &0\\
0 & 1 & -x & y^3\\
0 &0 & 0& -x 
\end{pmatrix}
\]
The resolution of $I^*$ admits two negative cancellations : {\small{$(P(-6),P(-8))$ and $(P(-10),P(-11))$}}. By performing the negative cancellations  we obtain the minimal free resolution of $I$, where $I=(xy^5 - x^3y^2 + xy^4, y^7 - x^2y^4 - x^2y^3 + x^4 - x^2y^2)$ is generated by the maximal minors of the following matrix
\[
\begin{pmatrix}
y^2 & 0 &0 &0\\
-x & y^4 &0 &0\\
1& -x &y^2 &0\\
0 & 1 & -x & y^3\\
0 &0 & 1& -x 
\end{pmatrix}
\]

$(3)$ The Hilbert series corresponding to the two sequences $(4,5,8,11)$ and $(7,9,12)$ is given as

\begin{eqnarray*}
HS_G(t)  &=&\frac{(1-t^{4}+t^{7}-t^{5}+t^{9}-t^{8}+t^{12}-t^{11})}{(1-t)^2}\\
&=& 1+2t+3t^2+4t^3+4t^4+3t^5+2t^6+2t^7+t^8+t^9+t^{10}
\end{eqnarray*}

and the  lex segment ideal with the above Hilbert function is $L=(x^4,x^3y^2,x^2y^4,xy^{7},y^{11})$. By repeating the above procedure we obtain $I^*=(x^4 - x^2y^2, -x^3y^2, -xy^7, y^{11})$ and $I=(xy^6 + xy^5 - x^3y^2, -x^2y^4 + y^6 - x^2y^3 + x^4 - x^2y^2). $

\end{example}

\medskip
 
\section{When $\nu(I)=3$}
  In this section we give a more precise upper bound on the  number of generators of $I^*, $ provided $\nu(I)=3$. It is clear that the method used in the above section suggests a procedure for getting information on the Betti table of $I^*$ anyway  the number of generators of $I$ will be  fixed. The problem is that the graph of the possible cancellations is too complicate for a general description. 
Let $h$ be an $O$-sequence and let us define $$\mathcal H=\{i|\triangle^2h(i)=0, \triangle h(i)=-1 \},$$  
$$\mathcal I=\{j|\triangle^2h(j)\leq -1\}\ \ \ \  \mbox{and}  \ \ \  \mathcal J=\{j|\triangle^2h(j)\geq 1\}.$$
 
\begin{theorem}\label{three}
Let $h:=(1,2,\ldots, d,h_d,\ldots ,h_s)$ be an $O$-sequence and 
let $I $  be an ideal of height two of a $2$-dimensional regular local ring $S$  such that   $HF_{S/I}=h.  $ If $\nu(I)=3, $  then $$\displaystyle{\sum_{\triangle^2h(i)\leq -1}|\triangle^2h(i)|}\leq \nu(I^*)\leq \displaystyle{\sum_{\triangle^2h(i)\leq -1}|\triangle^2h(i)|}+ \displaystyle{\sum_{i\geq 0}\delta_i(\mathcal H)}$$
 where $ \delta_i(\mathcal H)=\left\{
  \begin{array}{ll}
   1 &\mbox{ if }i\in \mathcal H\\
   0 &\mbox{ otherwise .}
       \end{array}
       \right.
 $
\end{theorem}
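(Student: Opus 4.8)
The lower bound I would simply read off from the general theory: $I^{*}$ is a perfect homogeneous ideal of codimension two whose $h$-vector is $h$, so Corollary \ref{bound2} gives at once $\nu(I^{*})\ge\sum_{i\in\mathcal I}|\triangle^{2}h(i)|$. The real content is the upper bound, and the plan is to run the same cancellation machinery as in Theorem \ref{bound} and Theorem \ref{unique}, but now tracking how zero and negative cancellations compete. First, since $\nu(I)=3$, Iarrobino's inequality (\ref{p}) forces $p=\max_{i}|\triangle h(i)|\le 2$, which keeps every multiplicity that occurs bounded by $2$ and will be used throughout. Let $L$ be the lex-segment ideal of $h$ and recall its Eliahou--Kervaire resolution: its generators lie in degrees $d=\gamma_{1}\le\gamma_{2}\le\cdots\le\gamma_{d+1}$ (with $\gamma_{1}$ the pure power $x^{d}$) and its first syzygies in degrees $\gamma_{i}+1$, $i=2,\dots,d+1$. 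By \cite[Theorem 1.1]{p} the resolution of $I^{*}$ is that of $L$ after the zero cancellations, and by \cite[Remark 4.7]{rs} the resolution of $I$ is that of $I^{*}$ after the negative cancellations; hence the passage from $L$ to $I$ consists of exactly $d-2$ cancellations and $\nu(I^{*})=3+N$, where $N$ is the number of negative ones.

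Next I would encode the $d-2$ cancellations as a matching of first syzygies to generators in which a syzygy of degree $\rho$ may be paired with a generator of degree $\ge\rho$, equal degrees giving a zero cancellation and a strictly smaller syzygy-degree a negative one; the two unmatched syzygies and three unmatched generators are those surviving in $I$. Performing all zero cancellations recovers the baseline $\nu(I^{*})=M$ of Theorem \ref{bound}, so the task becomes to bound how many zero cancellations may be \emph{omitted}: each omitted (``skipped'') zero cancellation at a degree $e$ retains a generator and a syzygy of $I^{*}$ at $e$, raising $\nu(I^{*})$ by one, and must be paid for by one extra negative cancellation if $\nu(I)=3$ is to be kept, since $\nu(I)=\nu(I^{*})-N$. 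Thus $\nu(I^{*})=M+(\text{number of skipped zero cancellations})$, and it suffices to show at most $|\mathcal H|=\sum_{i\ge0}\delta_{i}(\mathcal H)$ of them can be skipped. The decisive point is that a skip at $e$ completes to a matching of the correct size only if the syzygy freed at $e$ can be re-routed to a strictly higher-degree generator that is not itself needed elsewhere; because $p\le2$, inspecting the local profile of $\triangle h$ at $e$ (the only possibilities being $(\triangle h(e-1),\triangle h(e))\in\{(-1,-1),(-1,-2),(-2,-1),(-2,-2)\}$) shows this room is present only on a plateau of $\triangle h$ at the value $-1$, i.e.\ when $\triangle^{2}h(e)=0$ and $\triangle h(e)=-1$ — exactly the elements of $\mathcal H$, each of multiplicity one — while at a descent ($e\in\mathcal I$), an ascent ($e\in\mathcal J$), or a plateau at $-2$ the surrounding multiplicities or the Hilbert--Burch degree constraints obstruct the re-routing. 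Summing over degrees gives at most $|\mathcal H|$ skips, whence $\nu(I^{*})\le M+|\mathcal H|$.

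I expect the hard part to be precisely that last structural claim, because the matching is genuinely \emph{constrained}: zero and negative cancellations draw on the same generators and syzygies, so a naive unconstrained Hall/K\"onig count overshoots the truth. Indeed one can produce $O$-sequences with $|\mathcal H|=0$ (for instance with $\triangle h$ running through $\ldots,-2,-1,0,-1,-2,\ldots$) for which an unrestricted maximal family of negative cancellations has more than $M-3$ members, yet no matching of size $d-2$ leaving three generators and two syzygies realizes more than $M-3$ of them, so that $\nu(I^{*})=M$ is forced. Turning the heuristic ``re-routing'' analysis into a proof therefore amounts to describing, degree by degree, the complicated graph of admissible cancellations and showing that its surplus of negative over zero cancellations is controlled exactly by the plateaus of $\triangle h$ at $-1$; this case analysis, and the full exploitation of the constraint $p\le2$ that confines all the multiplicities, is where the bulk of the work lies.
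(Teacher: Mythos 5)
Your setup is exactly the paper's: the lex-segment ideal $L$ with its Eliahou--Kervaire resolution, Peeva's zero cancellations producing $I^*$, Rossi--Sharifan's negative cancellations producing $I$, the bound $p\le 2$ forced by $\nu(I)=3$, the identity $\nu(I^*)=M+(\text{number of skipped zero cancellations})$ with $M=\sum_{i\in\mathcal I}|\triangle^2h(i)|$, and the division into the four local profiles $(\triangle h(e-1),\triangle h(e))\in\{(-1,-1),(-1,-2),(-2,-1),(-2,-2)\}$ at a candidate skip position $e=c_j$ (these are, respectively, $e\in\mathcal H$, $\triangle^2h(e)=1$ read backwards, $\triangle^2h(e)=-1$, and the plateau at $-2$ --- the same cases the paper treats). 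But the decisive claim, that a skip is impossible in the last three profiles and possible only on a $(-1,-1)$ plateau, is merely asserted: ``the surrounding multiplicities or the Hilbert--Burch degree constraints obstruct the re-routing'' is not an argument, and you say yourself that turning this into a proof ``is where the bulk of the work lies.'' That claim \emph{is} the theorem; as written, your proposal establishes the lower bound and reduces the upper bound to an unproved structural statement, so there is a genuine gap.

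The paper closes precisely this gap with a short rank count that your matching language circles around but never performs. Since $I$ is perfect of height two with $\nu(I)=3$, Hilbert--Burch gives $0\to F_2=S^2\to F_1=S^3\to I\to 0$, so exactly two syzygies survive the entire cancellation process. Any cancellation (zero or negative) pairs a syzygy of degree $j$ with a generator of degree $j'\ge j$; hence, once the zero cancellation at $c_j$ is refused, every syzygy of $L$ of degree $\ge c_j$ --- there are $\sum_{i\ge j-1}\beta_{1,c_i+1}(L)$ of them --- can only be paired with a generator of degree $\ge c_{j+1}$, and there are $\sum_{i\ge j+1}\beta_{0,c_i}(L)$ of those. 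In the profiles $(-2,-1)$, $(-1,-2)$ and $(-2,-2)$ the first number exceeds the second by at least $3$ (here $p\le 2$ pins down all multiplicities), so at least three syzygies survive every possible completion of the cancellation process, contradicting $\rank F_2=2$; in the profile $(-1,-1)$ the excess is exactly $2$, so the skip is consistent and contributes exactly one extra generator to $I^*$. Summing the allowed skips over $\mathcal H$ gives the upper bound. Inserting this count at your ``decisive point'' turns your outline into the paper's proof; without it, or some equivalent argument, the upper bound is not proved.
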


\begin{proof}
Notice that $I^*$ is a homogeneous perfect ideal of height two in $P:=gr_{\n}(S)$. By Theorem 1.1, we have $\sum_{i\in \mathcal I}|\triangle^2h(i)|\leq \nu(I^*)$. Since $\nu(I)=3$ then by \cite[Theorem 2.3]{b} we have $p=\max_{i\geq 1}\{|\triangle h(i)|\}\leq 2$. Let $L$ be the lex-segment ideal corresponding to the numerical function $h$. Let us denote by $p_j:=\triangle h(j)$. Since $L$ is a Borel fixed ideal by \cite{EK} the  minimal free resolution of $L$  is of the form

{$$0\longrightarrow \bigoplus_{j=1}^t P^{|p_{c_j}|}(-c_j-1)\longrightarrow P(-d)\bigoplus_{j=1}^tP^{|p_{c_j}|}(-c_j)\longrightarrow L\longrightarrow 0$$}where $d\leq c_1<c_2<\ldots <c_t$. By \cite[Theorem 4.1]{rs} the Betti numbers of $I$  come from the Betti numbers of $L$ 
by performing a sequence of consecutive zero and negative cancellations and the minimal free resolution of $I$ is of the form $$0\longrightarrow F_2=S^2\longrightarrow F_1=S^3\longrightarrow I\longrightarrow 0.$$
  By \cite[Theorem 1.1]{p} the Betti numbers of $I^*$ come from the Betti numbers of  $L$ by a sequence of consecutive zero cancellation. Thus in order to have the upper bound of the number of generators of $I^*$ we need to find how many zero cancellations in the minimal free resolution of $L$ can be replaced by negative cancellations. Let us assume that we have a zero cancellation  at the $c_j$-th place.  Then we have $c_{j-1}+1=c_j$. There are three possibilities for $\triangle^2h(c_j)$ namely $-1,1$ or $0$ as $|p_{c_j}|\leq 2$ for all $j=1,\ldots t$. 
First assume that we have  $\triangle^2h(c_j)=1$ then the resolution will be of the form
$$\longrightarrow \cdots P^2(-c_j)\oplus P(-c_j-1)\oplus \cdots \longrightarrow \cdots P^2(-c_{j-1})\oplus P(-c_j)\oplus P^{|p_{c_{j+1}}|}P(-c_{j+1})\cdots$$
If we don't perform the zero cancellation $(P(-c_j),P(-c_j))$ then $\sum_{i=j-1}^t\beta_{1c_i+1}(L)=\sum_{i=j+1}^t\beta_{0c_i}(L)+3$ which forces rank$F_2= 3$ which is a contradiction. Thus the zero cancellation at the $c_j$-th place can not be replaced by negative cancellation.

Next assume that we have a zero cancellation at the $c_j$-th place with $\triangle^2h(c_j)=-1$ then the resolution will be of the form 
$$\longrightarrow \cdots P(-c_j)\oplus P^2(-c_j-1)\oplus \cdots \longrightarrow \cdots P(-c_{j-1})\oplus P^2(-c_j)\oplus P^{|p_{c_{j+1}}|}P(-c_{j+1})\cdots$$
If we don't perform the zero cancellation $(P(-c_j),P(-c_j))$ then  $\sum_{i=j-1}^t\beta_{1c_i+1}(L)=\sum_{i=j+1}^t\beta_{0c_i}(L)+3$ which forces rank$F_2= 3$ which is a contradiction. Thus the zero cancellation at the $c_j$-th place can not be replaced by negative cancellation.

Let us assume that we have a zero cancellation at the $c_j$-th place and $\triangle^2 h(c_j)=0$. It can happen in the following two ways. First consider the following case:
$$\longrightarrow \cdots  P^2(-c_j)\oplus P^2(-c_j-1)\oplus \cdots \longrightarrow \cdots P^2(-c_{j-1})\oplus P^2(-c_j)\oplus P^{|p_{c_{j+1}}|}P(-c_{j+1})\cdots$$ where $\triangle h(c_j)=-2$.
If we don't perform both the zero cancellation $(P^2(-c_j),P^2(-c_j))$ then $\sum_{i=j-1}^t\beta_{1c_i+1}(L)=\sum_{i=j+1}^t\beta_{0c_i}(L)+4$ which forces rank$F_2= 4$ which is a contradiction. Thus the zero cancellation at the $c_j$-th place can not be replaced by negative cancellation.
Next consider the following case 
$$\longrightarrow \cdots P(-c_j)\oplus P(-c_j-1)\oplus \cdots \longrightarrow \cdots P(-c_{j-1})\oplus P(-c_j)\oplus P^{|p_{c_{j+1}}|}P(-c_{j+1})\cdots$$ where $\triangle h(c_j)=-1$
If we don't perform  the zero cancellation $(P(-c_j),P(-c_j))$ then $\sum_{i=j-1}^t\beta_{1c_i+1}(L)=\sum_{i=j+1}^t\beta_{0c_i}(L)+2$ which forces rank$F_2= 2$. Thus the zero cancellation $(P(-c_j),P(-c_j))$ can be replaced by a negative cancellation and it will increase the number of generators of $I^*$. Therefore we have 
$$\nu(I^*)\leq \displaystyle{\sum_{\triangle^2h(i)\leq -1}|\triangle^2h(i)|}+\displaystyle{\sum_{i\geq 0}\delta_i(\mathcal H)}$$
\end{proof}

Goto-Heinzer-Kim in \cite[Corollary 1.3]{gkh} proved that if   $I$ is a complete intersection ideal of codimension two,  then $I^*$ is a perfect ideal provided  $\nu(I^*) = 3.$ Inspired by this result we  may ask  if, given    a perfect ideal $I$ of codimension two,   the condition  $\nu(I^*) = \nu(I) +1 $   could be  sufficient for having $I^*$ perfect. At the moment the answer is unknown. The authors thank Youngsu Kim for pointing out   a mistake in a related example which appears in a preliminary version of this  paper.

\end{document}